\newtheorem{algorithm}{Algorithm}
\newtheorem{theorem}{Theorem}
\newtheorem{lemma}{Lemma}
\newtheorem{assumption}{Assumption}
\newtheorem{definition}{Definition}
\newcommand{\aj}[1]{{\color{black}#1}}
\providecommand{\keywords}[1]{\textbf{\textit{keywords:}} #1}
\begin{document}
\allowdisplaybreaks
\title{Primal-Dual Incremental Gradient Method for Nonsmooth and Convex Optimization Problems
}


\author{Afrooz Jalilzadeh\footnote{ Department of Systems and Industrial Engineering, The University of Arizona, 1127 E James Rogers Way, Tucson, AZ. Email: afrooz@arizona.edu}}




\date{}

\maketitle
\begin{abstract}
In this paper, we consider a nonsmooth convex  finite-sum problem with a conic constraint. To overcome the challenge of projecting onto the constraint set and computing the full (sub)gradient, we introduce a primal-dual incremental gradient scheme where only a component function and two constraints are used to update each primal-dual sub-iteration in a cyclic order. 
We demonstrate an asymptotic sublinear rate of convergence in terms of suboptimality and infeasibility which is an improvement over the state-of-the-art incremental gradient schemes in this setting.  
Numerical results suggest that the proposed scheme compares well with competitive methods.

\end{abstract}
\keywords{Incremental Gradient; Primal-Dual Method; Convex Optimization.}
\section{Introduction}
\label{intro}
Convex constrained optimization has a broad range of applications in many areas, such as machine learning (ML). As data gets more complex and the application of ML algorithms becomes more diversified, the goal of recent ML research is to improve the efficiency and scalability of algorithms. In this paper, we consider the following nonsmooth and convex constrained  problem,
\begin{align}\label{p1}
&\min_{x\in X} f(x)\quad \mbox{s.t.} \quad Ax-b\in -\mathcal K, 
\end{align} 
where $f(x)=\sum_{i=1}^mf_i(x)$, $A=\begin{bmatrix}A_1^T&\hdots&A_m^T\end{bmatrix}^T$, $b=\begin{bmatrix}b_1^T&\hdots&b_m^T\end{bmatrix}^T$, $\mathcal K=\Pi_{i=1}^m \mathcal K_i$.  For each $i\in\{1,\hdots,m\}$, the function $f_i : \mathbb R^n \to \mathbb R$ is convex (possibly nonsmooth), $A_i\in \mathbb R^{d_i\times n}$, $b_i\in \mathbb R^{d_i}$ and $\mathcal K_i\subseteq \mathbb R^{d_i}$ is a closed convex cone, and $X\subset \mathbb R^n$ is a compact and convex set. \aj{We assume that the projection onto $\mathcal K_i$ can be computed efficiently while the projection onto the preimage set $\{x\mid A_ix-b_i\in -\mathcal K_i\}$ is assumed to be impractical for any $i\in\{1,\hdots,m\}$.  }

Letting $d\triangleq \sum_{i=1}^m d_i$, we introduce a dual multiplier $y=[y_i]_{i=1}^m\in\mathbb R^d$ for the constraint in  \eqref{p1} and $y^*$ denotes a dual optimal solution. Suppose a constant $B>0$ exists such that $\|y^*\|\leq B$. Such a bound $B$ can be computed efficiently if a slater point of \eqref{p1} is available, see Lemma \ref{slater}. Let $Y=\Pi_{i=1}^m Y_i$, $Y_i=\{y_i\in \mathbb R^{d_i}\mid\sqrt{m} \|y_i\|\leq B+1\}$ which implies that $\|y\|\leq B+1$, for all $y\in Y$, then problem \eqref{p1} can be equivalently written as the following saddle point (SP) problem:
\begin{align}\label{sp}
&\min_{x\in  X}\max_{y=[y_i]_{i=1}^m\in Y\cap\mathcal K^*} \phi(x,y)\triangleq\sum_{i=1}^m f_i(x)+y_i^T(A_ix-b_i),
\end{align} 
where $\mathcal K^*$ denotes the dual cone of $\mathcal K$, i.e., $\mathcal K^*\triangleq \{u\in \mathbb R^d: \langle u,v\rangle\geq 0,\ \forall v\in \mathcal K \}$.  

\paragraph{Motivation.} Problem \eqref{p1} has a broad range of applications in ML, wireless sensor networks, signal processing, etc. Next, we illustrate examples written in the form of problem \eqref{p1} in which projecting on the constraint is challenging. \vspace{-4mm}
\paragraph{Example 1.} (Basis Pursuit Denoising (BPD) problem) Let $x^*$ be a solution of a linear system of equations $Ax=b$, where $A$ and $b$ represent a transformation matrix and the observation vector, respectively. This problem arises in signal processing, image compression and compressed sensing \cite{chen1994basis} to recover a sparse solution $x$ given $A$ and $b$. In particular, one needs to solve 
$\min_x\{\|x\|_1 \mid  A_ix=b_i, ~ \forall i\in\{1,\hdots,m\}\}$. In real-world applications, the observations $b$ might be noisy \cite{donoho2006compressed}. Therefore, the problem can be formulated as follows: 
$$\min \ \|x\|_1, \  \mbox{s.t.} \ \|A_ix-b_i\|\leq \delta/\sqrt{m}, \ \forall i\in\{1,\hdots,m\}.$$
BDP problem is a special case of \eqref{p1} and the constraint  can be written as $(b_i-A_ix,-\delta/\sqrt{m})\in -\mathcal K_i$ where $\mathcal K_i=\{(y,t)\in \mathbb R^{d_i}\times \mathbb R\mid \|y\|\leq t\}$ is a second-order cone. Projection onto the second-order cone can be computed as:
\begin{align*}
\mathbf \Pi_{\mathcal K}(y,t)=\begin{cases} (y,t)& \mbox{if}\  \|y\|\leq t;\\ (0,0) & \mbox{if} \ \|y\|\leq -t; \\ \frac{\|y\|+t}{2}\left(\tfrac{y}{ \|y\|},1\right)& \mbox{otherwise},\end{cases}
\end{align*}
where $\mathbf \Pi_\mathcal K(y)$ denotes the projection of $y$ onto $\mathcal K$ \cite{bauschke1996projection}; however, projection onto the preimage set $\{x\mid A_ix-b_i\in -\mathcal K_i\}$ can be impractical.\vspace{-4mm}
\paragraph{Example 2.} (Constrained Lasso problem)  Let $y\in \mathbb R^{s}$, $B\in \mathbb R^{s\times n}$, and $x\in \mathbb R^n$ denote the response vector, the design matrix of predictors, and the vector of unknown regression coefficients. Then the general Lasso problem can be written as follows \cite{gaines2018algorithms}:
\begin{align*}
\min_x \tfrac{1}{ 2}\|y-Bx\|^2+\lambda_1 \|x\|_1+\lambda_2\sum_{j=2}^n |x_j-x_{j-1}|, \ \mbox{s.t.} \ Ax=b, \ \mbox{and} \ Cx\leq d,
\end{align*}
where $\lambda_1,\lambda_2\geq 0$  are the tuning parameters. 
The constrained Lasso problem above is a special case of problem \eqref{p1} by defining $f_i(x)=\tfrac{1}{ 2m}\|B_ix-y_i\|^2+\tfrac{\lambda_1}{ m}\|x\|_1+\tfrac{\lambda_2}{ m}\sum_{j=2}^n |x_j-x_{j-1}|$, where $B=[B_1^T,\hdots, B_m^T]^T$ and $y=[y_1^T,\hdots,y_m^T]^T$. The constraint can be written as $\begin{bmatrix} A_i\\ C_i\end{bmatrix} x-\begin{bmatrix} b_i\\ d_i\end{bmatrix}\in -\mathcal K,$ where $A_i\in \mathbb R^{p_i\times n}$, $C_i\in \mathbb R^{q_i\times n}$, $p_i+q_i=d$, $\mathcal K= \{\mathbf 0_{p_i}\}\times \mathbb R^{q_i}_+$ and $\mathbf \{\mathbf 0_{p_i}\}\in \mathbb R^{p_i}$. \vspace{-4mm}

\paragraph{Related work.} One of the main approaches to solve problem \eqref{p1}, when the projection is cheap, is using the \aj{Projected Incremental Gradient (PIG) scheme \cite{nedic2001incremental} where  the (sub)gradient of the function is approximated in a deterministic manner and cyclic order. Let $C\triangleq \{x\in X\mid Ax-b\in - \mathcal K\}$ denote the constraint set in \eqref{p1}, then each iteration of PIG has the following main steps:\\
{\bf for} $i=1,\hdots, m$\\ \vspace{-3mm}

1. Set $x_{k,1}=x_k$ and pick stepsize $\gamma_k$;\\\vspace{-3mm}

2. $x_{k,i+1}=\mathbf \Pi_C (x_{k,i}-\gamma_kg_{k,i});$\\\vspace{-3mm}

3. Set $x_{k+1}=x_{k,m+1}$,\\\vspace{-3mm}

\noindent {\bf end}}

\aj{where $g_{k,i}\in \partial f_{i}(x_{k,i})$, and $\partial f_i(x)$ denotes subdifferential of function $f_i(x)$, for all $i\in\{1,\hdots,m\}$}. When the problem is nonsmooth and convex, the convergence rate of $\mathcal O(1/\sqrt k)$ has been shown for PIG. The accelerated variant of Incremental Gradient (IG) scheme is studied in \cite{blatt2007convergent,defazio2014saga,gurbuzbalaban2017convergence,le2013stochastic}. These methods require storing a variable of size $\mathcal O(mn)$ at each iteration, hence, are impractical for large-scale problems and/or when the projection is hard to compute. One avenue to handle the constraints is by leveraging iterative regularization schemes \cite{amini2019iterative,yousefian2017smoothing}. Recently in \cite{kaushik2020projection}, authors introduced averaged iteratively regularized IG method that does not involve any hard-to-project computation to solve and require storing a variable of size $\mathcal O(n)$. However, their  suboptimality and infeasibility rates are $\mathcal O(1/k^{0.5-b})$ and $\mathcal O(1/k^b)$, respectively, for some $b\in(0,0.5)$. In contrast to the existing methods, in this paper, we address the challenge of projection by introducing a primal-dual scheme requiring memory of $\mathcal O(n+d/m)$. Moreover, our new primal-dual IG scheme improves the rate results to $\mathcal O(1/\sqrt k)$ in terms of suboptimality and infeasibility. 

Convex constrained optimization problems can be viewed as a special case of saddle point problems using Lagrangian duality. Different primal-dual methods have been introduced to solve such problems. Consider a saddle point problem of the form $\min_{x\in X}\max_{y\in Y} f(x)+\phi(x,y)-g_i(y_i)$, where $\phi(x,y)=\sum_{i=1}^m \langle A_ix-b_i,y_i \rangle$. 
When the objective function is strongly-convex strongly-concave and smooth, a linear convergence rate has been shown in \cite{yu2015doubly,zhang2017stochastic,zhu2015adaptive} using stochastic methods by randomly selecting the dual and/or primal coordinates. Assuming a merely convex-concave setting, the convergence rate of $\mathcal O(1/k)$ has been shown in \cite{chambolle2018stochastic,xu2017first}. Moreover, 
Xu \cite{xu2020primal} considered problem \eqref{p1} with nonlinear constraint $h_i\leq 0$ where $h_i$ is convex, and bounded function and $\partial h_i$ is bounded. They proposed a stochastic augmented Lagrangian scheme with convergence rate of $\mathcal O(1/\sqrt k)$. In this paper, we aim to recover the rate of $\mathcal O(1/\sqrt k)$ by approximating the subgradient in a deterministic manner and considering weaker assumptions. Finally, in our recent work \cite{jalilzadeh2019doubly}, we considered $\min_x \max_y \sum_{i=1}^m f_i(x_i)+ \sum_{j=1}^p \phi_j(x,y)-\sum_{\ell=1}^n h_\ell(y_\ell)$ where $f_i,h_\ell$ are convex and nonsmooth with efficiently computable proximal map and $\phi(x,y)$ is a smooth convex-concave function. The convergence rate of $\mathcal O(\log(k)/k)$ is obtained for merely convex setting by sampling the component functions using an increasing sample size. However, in this paper, we introduce a deterministic method to solve a nonsmooth optimization problem with a conic constraint. \vspace{-4mm}

\paragraph{Contribution.} In this paper, we consider a nonsmooth minimization with a conic constraint. Considering the equivalent saddle point formulation, we propose a novel primal-dual incremental gradient (PDIG) scheme. In particular, the proposed method comprises a deterministic cycle in which only two constraints, and one objective function component, $f_i$, are utilized to update the iterates. This new approach significantly improves the previous state-of-the-art incremental gradient method for constrained minimization problems \cite{kaushik2020projection} from $\mathcal O(1/k^{\frac{1}{4}})$ to $\mathcal O (1/\sqrt{k})$ in terms of suboptimality/infeasibility. Moreover, the proposed scheme guarantees a convergence rate in a deterministic manner, in contrast to randomized methods \cite{xu2020primal} where the convergence rate is in the expectation sense.

In Section \ref{sec:assump}, we provide the main assumptions and definitions, required for the convergence analysis. Next, in Section \ref{sec:rate}, we introduce  PDIG method and show the convergence rate of $\mathcal O(1/\sqrt k)$ for both suboptimality and infeasibility. Finally, in Section \ref{sec:num} we implemented the proposed algorithm to solve the constrained Lasso problem and compare it with other competitive methods. 

\section{Assumptions and definitions}\label{sec:assump}
In this section, we outline some important notations, definitions and the required assumptions that we consider for the analysis of the method. 
\paragraph{Notation.} Throughout the paper, $\|.\|$ denotes the Euclidean norm and $\mbox{\bf relint}(X)$ denotes the relative interior of the set $X$. We define $\mbox{\bf dist}_{\mathcal K}(u)\triangleq \|\mathbf \Pi_\mathcal K(u)-u\|=\||\mathbf \Pi_{-\mathcal K^*}(u)\|$. Also, $\mathbf I_d$ denotes $d\times d$ identity matrix. 
\begin{definition}
Define $U_i\in \mathbb R^{d\times d_i}$ for $i\in\{1,\hdots,m\}$ such that $\mathbf I_d=[U_1,\hdots,U_m]$. 
\end{definition}
We impose the following requirements on problem \eqref{p1}.
\begin{assumption}\label{assump1} For all $i\in \{1,\hdots,m\}$, the following hold:\\
(a) A primal-dual solution, $(x^*,y^*)$, of problem \eqref{p1} exists. \\
(b) Function $f_i$ is convex and nonsmooth. \\
(c) $f_i$ is Lipschitz continuous with constant $L$. \\
(d) $X$ is a  compact and convex set, i.e., $\exists D>0$ s.t. $\|x\|\leq D$, $\forall x\in X$. \\
(e) There exists a constant $B>0$ such that $\|y^*\|\leq B$. 
\end{assumption}

\aj{Assumption \ref{assump1}(c) is a common assumption for nonsmooth problems and it implies that $f$ at every point $x$ admits a subgradient $g(x)$ such that $\|g(x)\|\leq L$. We assume that this small norm subgradient $g(x)$ is exactly the one reported by the first-order oracle as called with input $x$ and this is not a severe restriction, since at least in the interior of the domain all subgradients of $f$ are ``small" in the outlined sense (see section 5.3 in \cite{ben2001lectures} for more details). The following lemma states an important relation required for our convergence results.  
\begin{lemma}\label{lip rel}
Suppose a convex function $f:\mathbb R^n\to \mathbb R$ is Lipschitz continuous with constant $L$. Then $f(x)\leq f(y)+g(y)^T(x-y)+2L\|x-y\|$ holds for any $x,y\in \mathbb R^n$, where $g$ is a subgradient of function $f$.
\end{lemma}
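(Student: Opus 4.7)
The plan is to combine two simple facts: the scalar Lipschitz estimate on values of $f$, and the boundedness of the subgradient $g(y)$ that comes from the discussion preceding the lemma.

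First, I would invoke Lipschitz continuity directly on values: for any $x,y \in \mathbb R^n$,
\begin{equation*}
f(x) - f(y) \;\leq\; |f(x)-f(y)| \;\leq\; L\,\|x-y\|.
\end{equation*}
This already controls $f(x)$ from above, but not in terms that involve the subgradient $g(y)$. To bring $g(y)$ into the picture, I would add and subtract $g(y)^T(x-y)$:
\begin{equation*}
f(x) \;\leq\; f(y) + g(y)^T(x-y) \;-\; g(y)^T(x-y) \;+\; L\,\|x-y\|.
\end{equation*}

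Next I would bound $-g(y)^T(x-y)$ using Cauchy--Schwarz and the norm bound on the subgradient. The paragraph just before the lemma records that the oracle returns the ``small-norm'' subgradient satisfying $\|g(y)\|\leq L$ under Assumption~\ref{assump1}(c) (see the cited reference \cite{ben2001lectures}). Thus
\begin{equation*}
-g(y)^T(x-y) \;\leq\; \|g(y)\|\,\|x-y\| \;\leq\; L\,\|x-y\|,
\end{equation*}
and substituting into the previous display yields the claimed inequality
\begin{equation*}
f(x) \;\leq\; f(y) + g(y)^T(x-y) + 2L\,\|x-y\|.
\end{equation*}

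There is no real obstacle here; the only point that deserves a sentence of justification is the use of $\|g(y)\|\leq L$, which is not part of the generic definition of a subgradient but follows from Lipschitz continuity together with the convention, stated in the paragraph preceding the lemma, that the first-order oracle returns a minimal-norm subgradient. Convexity of $f$ itself is not explicitly needed for the upper bound once these two ingredients are in place, but it is what guarantees existence of $g(y)$ in the first place.
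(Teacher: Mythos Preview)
Your argument is correct, and in fact slightly cleaner than the paper's. The paper proves the same inequality by writing
\[
\langle g(y),\,y-x\rangle \;=\; \langle g(y)-g(x),\,y-x\rangle \;+\; \langle g(x),\,y-x\rangle,
\]
bounding the first term by $\|g(y)-g(x)\|\,\|x-y\|\le 2L\|x-y\|$ (using $\|g(x)\|,\|g(y)\|\le L$) and the second by $f(y)-f(x)$ via the subgradient inequality for $f$ at $x$. Your route replaces that subgradient inequality with the raw Lipschitz estimate $f(x)-f(y)\le L\|x-y\|$ and needs only the single bound $\|g(y)\|\le L$; as you note, convexity then serves only to guarantee that $g(y)$ exists. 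Either way the $2L$ arises the same way---one $L$ from controlling the function increment and one $L$ from controlling the linear term---so the two proofs are minor variants of each other, with yours avoiding the introduction of a second subgradient $g(x)$.
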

\begin{proof}
Using convexity of function $f$, Cauchy-Schwarz inequality, and boundedness of the subgradient, we can show the desired result as follows:
\begin{align*}
\langle g(y),y-x\rangle&=\langle g(y)- g(x),y-x\rangle+\langle g(x),y-x\rangle\\
&\leq \|g(y)- g(x)\|\|x-y\|+f(y)-f(x)\leq 2L\|x-y\|+f(y)-f(x).
\end{align*} 
\end{proof}}

In addition, note that the dual bound $B$ in Assumption \ref{assump1}(e) can be computed efficiently if a slater point of \eqref{p1} is available using the following lemma.  
\begin{lemma}\label{slater} \cite{hamedani2018primal}
Let $\hat x$ be a slater point of \eqref{p1}, i.e. $\hat x\in \mbox{\bf relint}(\mbox{\bf dom}(f))$ such that $Ax-b\in \mbox{int}(-\mathcal K)$, and $h:\mathbb R^d\to \mathbb R\cup \{-\infty\}$ denote the dual function, i.e.,
$$h(y)=\begin{cases}\inf_x f(x)+\langle Ax-b,y \rangle,&y\in \mathcal K^*\\-\infty,&o.w. \end{cases}$$
For any $\hat y\in \mbox{\bf dom}(h)$, let $Q_{\hat y}=\{y\in \mbox{\bf dom}(h): h(y)\geq h(\hat y)\}\subset \mathcal K^*$ denotes the corresponding superlevel set. Then for all $\hat y\in \mbox{\bf dom}(h)$, $Q_{\hat y}$ can be bounded as $\|y\|\leq \frac{f(\hat x)-h(\hat y)}{r^*}, \quad \forall y\in Q_{\hat y}$ where $0<r^*\triangleq \min_u \{-\langle A\hat x-y,u\rangle: \ \|u\|= 1, \ u\in \mathcal K^*\}$.
\end{lemma}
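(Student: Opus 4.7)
The plan is to prove the bound by combining two observations: first, a weak-duality inequality that controls $\langle A\hat x - b, y\rangle$ for every $y \in Q_{\hat y}$, and second, a Slater-based argument that forces $\langle A\hat x - b, y\rangle$ to be sufficiently negative whenever $y \in \mathcal{K}^*$.

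First I would unpack the definition of the dual function. For any $y \in Q_{\hat y}$, by definition $y \in \mathcal K^*$ and $h(y) \geq h(\hat y)$. Plugging $\hat x$ into the infimum defining $h(y)$ gives $h(y) \leq f(\hat x) + \langle A\hat x - b, y\rangle$, so
\begin{equation*}
-\langle A\hat x - b, y\rangle \;\leq\; f(\hat x) - h(y) \;\leq\; f(\hat x) - h(\hat y).
\end{equation*}
This is just weak duality applied at the Slater point and will serve as the upper bound for the linear functional $y \mapsto -\langle A\hat x - b, y\rangle$.

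Next I would produce the matching lower bound using the Slater condition. Since $\hat x$ is a Slater point, $A\hat x - b \in \mathrm{int}(-\mathcal K)$, equivalently $b - A\hat x \in \mathrm{int}(\mathcal K)$. For every unit vector $u \in \mathcal K^*$ one then has $\langle b - A\hat x, u\rangle > 0$, and because the set $\{u \in \mathcal K^* : \|u\| = 1\}$ is closed (hence the minimum in the definition of $r^*$ is attained), the quantity $r^* = \min\{-\langle A\hat x - b, u\rangle : \|u\|=1,\; u \in \mathcal K^*\}$ is strictly positive. For any nonzero $y \in Q_{\hat y} \subset \mathcal K^*$, the normalized vector $u = y/\|y\|$ belongs to the feasible set of the $r^*$-defining problem (using that $\mathcal K^*$ is a cone), so
\begin{equation*}
-\langle A\hat x - b, y\rangle \;=\; \|y\| \cdot \bigl(-\langle A\hat x - b, y/\|y\|\rangle\bigr) \;\geq\; r^*\,\|y\|.
\end{equation*}

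Chaining the two inequalities yields $r^*\|y\| \leq f(\hat x) - h(\hat y)$, and dividing by $r^* > 0$ gives the claimed bound; the $y = 0$ case is trivial. The only subtle step is verifying $r^* > 0$, which rests on compactness of the unit sphere intersected with the closed cone $\mathcal K^*$ and on the strict feasibility ensured by the Slater condition; I would not expect any other step to present real difficulty, since once $r^*>0$ is secured, the rest is elementary manipulation of the weak-duality inequality together with the homogeneity of linear functionals over the cone $\mathcal K^*$.
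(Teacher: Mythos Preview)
Your argument is correct and is the standard proof of this Slater-type bound. Note, however, that the paper does not actually supply a proof of this lemma: it is stated with a citation to \cite{hamedani2018primal} and used without proof, so there is no in-paper argument to compare against. Your weak-duality step combined with the homogeneity/compactness argument establishing $r^*>0$ is precisely the route one finds in the cited reference, so in that sense your approach matches the intended proof. One very minor remark: the statement in the paper has a typo ($A\hat x - y$ should read $A\hat x - b$ in the definition of $r^*$), and you have silently corrected it; you might also note the trivial edge case $\mathcal K^*=\{0\}$, where the feasible set for $r^*$ is empty but $Q_{\hat y}\subset\{0\}$ and the bound holds vacuously.
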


\section{Convergence analysis}\label{sec:rate}
In this section, we propose the Primal-Dual Incremental Gradient (PDIG) method, displayed in Algorithm \ref{alg1} to solve problem \eqref{sp}. 
\label{sec:1}
\begin{algorithm}[htbp]
\caption{Primal-Dual Incremental Gradient (PDIG) method}
\label{alg1}
{\bf input:} $x_1\in \mathbb R^n$, $y_1\in \mathbb R^d$, positive
sequences $\{\gamma_k\}_k$ and $\{\eta_k\}_k$, \aj{and let $x_{1,0}\gets x_{1}$}\\
{\bf for} $k=1\hdots K$ {\bf do}\\
\mbox{}\quad $(x_{k,1},y_{k,1})\gets (x_k,y_k)$;\\
\mbox{}\quad {\bf for} $i=	1,\hdots,m$ {\bf do}\\
\aj{\mbox{}\qquad $A_0\gets A_m$ and $U_0\gets U_m$;}\\
\mbox{}\qquad $y_{k,i+1}\gets  \mathbf \Pi_{Y\cap\mathcal K^*}\left(y_{k,i}+\eta_kU_i(A_ix_{k,i}-b_i)+\eta_kU_{i-1}A_{i-1}(x_{k,i}-x_{k,i-1})\right)$;\\
\mbox{}\qquad $x_{k,i+1}\gets\mathbf\Pi_X\left(x_{k,i}-\gamma_k(g_i(x_{k,i})+A_i^TU_i^Ty_{k,i+1})\right)$, where $g_i(x)\in \partial f_i(x)$;\\
\mbox{}\quad {\bf end for}\\
\aj{\mbox{}\quad $x_{k+1,0}\gets x_{k,m}$, $(x_{k+1},y_{k+1})\gets (x_{k,m+1},y_{k,m+1})$}; \\
{\bf end for}
\end{algorithm}

In the following theorem, we state our main result which is the convergence rate of PDIG in terms of suboptimality and infeasibility. 
\begin{theorem}\label{th1}
Suppose Assumption \ref{assump1} holds. Let $\{x_k,y_k\}_{k\geq 1}$ be the iterates generated by Algorithm \ref{alg1}, with the step-sizes chosen as $\eta_k=\tfrac{1}{ a_{\max} \sqrt k}$ and $\gamma_k=\tfrac{1}{ a_{\max}+\sqrt k}$ for all $k\geq 1$, where $a_{\max}=\max_{1\leq i\leq m}\{\|A_i\|\}$. Then the following result holds
$$\max \left\{|f(\bar x_K)-f(x^*)|, \mbox{\bf dist}_{-\mathcal K}(A\bar x_K-b)\right\}\leq \phi(\bar x_{K},\tilde y)-\phi(x^*,\bar y_{K})\leq \mathcal O(1/\sqrt K),$$
where $\tilde y\triangleq (\|y^*\|+1) \mathbf \Pi_{\mathcal K^*}(A\bar x_K-b)\| \mathbf \Pi_{\mathcal K^*}(A\bar x_K-b)\|^{-1}$ and $(\bar x_K,\bar y_K)\triangleq \tfrac{1}{K}\sum_{k=1}^K (x_k,y_k)$.
\end{theorem}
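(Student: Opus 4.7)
My plan is to prove the theorem in two stages. First, the implication
$\max\{|f(\bar x_K)-f(x^*)|,\mathbf{dist}_{-\mathcal K}(A\bar x_K-b)\}\le \phi(\bar x_K,\tilde y)-\phi(x^*,\bar y_K)$
is a structural consequence of the saddle-point formulation: Moreau's orthogonal decomposition applied to $u\triangleq A\bar x_K-b$ yields $\langle \tilde y,u\rangle=(\|y^*\|+1)\,\mathbf{dist}_{-\mathcal K}(u)$, since $\tilde y$ is proportional to $\mathbf\Pi_{\mathcal K^*}(u)$ and $\mathbf\Pi_{\mathcal K^*}(u)\perp\mathbf\Pi_{-\mathcal K}(u)$; $\bar y_K\in\mathcal K^*$ (as a convex combination of iterates produced by projection onto $Y\cap\mathcal K^*$) together with $Ax^*-b\in-\mathcal K$ gives $\langle \bar y_K,Ax^*-b\rangle\le 0$; and evaluating the Lagrangian at $y^*$ furnishes $f(\bar x_K)-f(x^*)\ge -\|y^*\|\,\mathbf{dist}_{-\mathcal K}(u)$. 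Combining these yields the reduction (after absorbing $\|y^*\|$-dependent constants into $\|y^*\|+1$), so the remaining task is to establish $\phi(\bar x_K,y)-\phi(x^*,\bar y_K)\le \mathcal O(1/\sqrt K)$ with $y=\tilde y$.

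For each sub-iteration $(k,i)$ I would write down the three-point projection identity associated with each update, yielding, for any $(x,y)\in X\times(Y\cap\mathcal K^*)$,
\begin{align*}
2\gamma_k\langle g_i(x_{k,i})+A_i^TU_i^Ty_{k,i+1},\,x_{k,i+1}-x\rangle &\le \|x_{k,i}-x\|^2-\|x_{k,i+1}-x\|^2-\|x_{k,i+1}-x_{k,i}\|^2,\\
2\eta_k\langle y-y_{k,i+1},\,U_i(A_ix_{k,i}-b_i)+U_{i-1}A_{i-1}(x_{k,i}-x_{k,i-1})\rangle &\le \|y_{k,i}-y\|^2-\|y_{k,i+1}-y\|^2-\|y_{k,i+1}-y_{k,i}\|^2.
\end{align*}
Convexity of $f_i$ converts $\gamma_k g_i(x_{k,i})^T(x_{k,i}-x)$ into $\gamma_k[f_i(x_{k,i})-f_i(x)]$, and Lemma~\ref{lip rel} lets me swap $f_i(x_{k,i})$ for $f_i(x_k)=f_i(x_{k,1})$ at the cost of $2L\|x_{k,i}-x_k\|\le 2L\sum_{j=1}^{i-1}\|x_{k,j+1}-x_{k,j}\|$, a slack that will eventually be Young-absorbed into the primal descent.

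The core technical step is handling the extrapolation $\eta_k U_{i-1}A_{i-1}(x_{k,i}-x_{k,i-1})$, which is designed so that, combined with the ``natural'' contribution $U_{i-1}(A_{i-1}x_{k,i-1}-b_{i-1})$ from sub-iteration $i-1$, block $i-1$ of the dual effectively sees an update evaluated at the latest primal point $x_{k,i}$ -- a block-cyclic analogue of the Chambolle--Pock extrapolation. Writing
$$\langle y-y_{k,i+1},U_{i-1}A_{i-1}(x_{k,i}-x_{k,i-1})\rangle=\langle y-y_{k,i},U_{i-1}A_{i-1}(x_{k,i}-x_{k,i-1})\rangle+\langle y_{k,i}-y_{k,i+1},U_{i-1}A_{i-1}(x_{k,i}-x_{k,i-1})\rangle,$$
the first piece telescopes across consecutive sub-iterations, while the second, via Young's inequality with $\|A_{i-1}\|\le a_{\max}$, produces a residual of size $\eta_k^2 a_{\max}^2\|x_{k,i}-x_{k,i-1}\|^2$ that is dominated by the descent term $-\|x_{k,i}-x_{k,i-1}\|^2$ as soon as $\eta_k a_{\max}\le 1$. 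The chief obstacle is exactly this balancing: the Lipschitz residuals, the cross terms between $A_i$-blocks and the dual iterate, and the extrapolation residuals must all be simultaneously dominated by the descent quantities $\|x_{k,i+1}-x_{k,i}\|^2$ and $\|y_{k,i+1}-y_{k,i}\|^2$, which is what forces the specific step-size choice $\eta_k=1/(a_{\max}\sqrt k),\,\gamma_k=1/(a_{\max}+\sqrt k)$ and the underlying compatibility $\gamma_k\eta_k a_{\max}^2\le 1$.

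Summing the consolidated per-sub-iteration inequalities over $i=1,\ldots,m$ (using the cyclic convention $A_0=A_m$, $U_0=U_m$) and then over $k=1,\ldots,K$, telescoping the $\|x_k-x\|^2$ and $\|y_k-y\|^2$ distances, exploiting boundedness of $X$ and $Y$, and finally applying Jensen's inequality through the convexity of $\phi(\cdot,y)$ and concavity of $\phi(x,\cdot)$ to pass to the averaged iterates produces
$$\phi(\bar x_K,y)-\phi(x^*,\bar y_K)\;\le\;\frac{C_1(D^2+B^2)}{K\min_k\{\gamma_k,\eta_k\}}+\frac{C_2 L^2\sum_k\gamma_k+C_3 a_{\max}^2D^2\sum_k\eta_k}{K}.$$
With the prescribed step sizes, $\sum_k\gamma_k=\sum_k\eta_k=\mathcal O(\sqrt K)$ and $\min_k\{\gamma_k,\eta_k\}=\Omega(1/\sqrt K)$, so every term on the right is $\mathcal O(1/\sqrt K)$; instantiating $y=\tilde y$ and combining with the reduction of the first paragraph yields the claimed bound.
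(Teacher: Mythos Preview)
Your overall architecture matches the paper's: the reduction via Moreau decomposition is identical (the paper uses $f(x^*)=\phi(x^*,y^*)\ge\phi(x^*,\bar y_K)$ where you use $\bar y_K\in\mathcal K^*$ directly, but these are equivalent), and your main engine---three-point projection inequalities, telescoping the extrapolation term across sub-iterations, and Young-balancing the residuals against the negative descent terms under the compatibility $\gamma_k\eta_k a_{\max}^2\le 1$---is exactly what the paper does in its Lemma~\ref{lem2}.

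There is, however, a genuine gap. After summing the per-sub-iteration inequalities over $i$, the left-hand side you obtain is
\[
\sum_{i=1}^m\Big[f_i(x_{k,i+1})-f_i(x^*)-(y_{k,i+1}-y)^TU_i(A_ix_{k,i+1}-b_i)-(x^*-x_{k,i+1})^TA_i^TU_i^Ty_{k,i+1}\Big],
\]
which is \emph{not} $\phi(x_k,y)-\phi(x^*,y_k)$: each block sees $(x_{k,i+1},y_{k,i+1})$, not $(x_k,y_k)$. You address the $f_i$-part of this centering (swapping $f_i(x_{k,i})$ for $f_i(x_k)$), but you never address the bilinear part---replacing $\sum_i y^TU_iA_ix_{k,i+1}$ by $y^TAx_k$ and $\sum_i y_{k,i+1}^TU_i(A_ix^*-b_i)$ by $y_k^T(Ax^*-b)$. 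This step is essential; without it Jensen's inequality cannot be applied to produce $\phi(\bar x_K,y)-\phi(x^*,\bar y_K)$. The paper carries it out explicitly (its inequality (\ref{bound 3})) by first proving the crude drift bounds
\[
\|x_{k,i}-x_k\|\le \gamma_k\,i\,(L+(B+1)a_{\max}),\qquad \|y_{k,i}-y_k\|\le \eta_k\,i\,(3Da_{\max}+b_{\max}),
\]
via nonexpansiveness of projection and boundedness of $X$ and $Y$, and these generate two additional error terms $\gamma_k\tilde C_1+\eta_k\tilde C_2$ of order $m^2$ in each outer iteration. Your Young-absorption idea for the $f_i$-residual could in principle be adapted to the bilinear terms as well, but that is not what you wrote, and it is not automatic: the bilinear residual involves $\|y_{k,i}-y_k\|$, for which there is no negative ``dual descent'' term available after you have already spent $-\|y_{k,i+1}-y_{k,i}\|^2$ on the extrapolation cross term. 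The paper's route---direct drift bounds rather than absorption---is what makes this step go through.

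A minor note: Lemma~\ref{lip rel} is used in the paper to pass from $g_i(x_{k,i})^T(x_{k,i}-x_{k,i+1})$ to $f_i(x_{k,i})-f_i(x_{k,i+1})+2L\|x_{k,i+1}-x_{k,i}\|$, i.e., to move from $x_{k,i}$ to the next sub-iterate $x_{k,i+1}$; the subsequent swap to $x_k$ uses plain Lipschitz continuity. Your proposal conflates the two.
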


Before proving Theorem \ref{th1}, we state a technical lemma for projection mappings and then provide a one-step analysis of the algorithm in Lemma \ref{lem2}. 
\begin{lemma}\label{lem1}\cite{bertsekas2003convex}
\noindent Let $ X\subseteq \mathbb{R}^n $  be a nonempty closed and convex set. Then the following hold:
(a) $\|\Pi_X [u]- \Pi_X [v]\| \leq \|u-v\| $ for all $ u,v \in \mathbb{R}^n$;
(b) $ (\Pi_X [u]-u)^T(x-\Pi _X [u]) \geq 0 $ for all $u \in \mathbb{R}^n$ and $x \in X$.  
\end{lemma}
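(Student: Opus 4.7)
The plan is to first establish a one-step primal-dual bound on $\phi(x_{k+1},y) - \phi(x,y_{k+1})$, as announced in Lemma \ref{lem2}. This would be done by applying the variational inequality in Lemma \ref{lem1}(b) to each of the $2m$ projection steps of the inner loop of Algorithm \ref{alg1}. Using the three-point identity $2(a-b)^T(c-a)=\|c-b\|^2-\|a-b\|^2-\|c-a\|^2$, the dual step yields a bound on $\eta_k\langle U_i(A_ix_{k,i}-b_i)+U_{i-1}A_{i-1}(x_{k,i}-x_{k,i-1}),\, y-y_{k,i+1}\rangle$ by $\tfrac{1}{2}\|y-y_{k,i}\|^2-\tfrac{1}{2}\|y-y_{k,i+1}\|^2-\tfrac{1}{2}\|y_{k,i+1}-y_{k,i}\|^2$, with an analogous bound for the primal step (with $\gamma_k$ in place of $\eta_k$ and the subgradient plus coupling term on the left). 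The subgradient inner product $\langle g_i(x_{k,i}), x_{k,i+1}-x\rangle$ is then converted into function-value differences $f_i(x_{k,i+1})-f_i(x)$ by combining convexity with Lemma \ref{lip rel}, introducing a residual controlled by $2L\|x_{k,i+1}-x_{k,i}\|$.

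Next I would sum the resulting inequalities over $i=1,\dots,m$ within one outer iteration. The critical design choice in Algorithm \ref{alg1} is the extrapolation $\eta_k U_{i-1}A_{i-1}(x_{k,i}-x_{k,i-1})$, whose purpose is to telescope the cross-coupling terms $\eta_k\langle A_i(x_{k,i+1}-x_{k,i}),\, U_i^T y\rangle$ that would otherwise obstruct closing the inner sum into the full Lagrangian difference $\phi(x_{k+1},y)-\phi(x,y_{k+1})$. I expect this cancellation to hold within one cycle and to link successive cycles through the carry-over $x_{k+1,0}\gets x_{k,m}$ prescribed by the algorithm. The leftover residuals of the form $\eta_k\|A_i\|\|x_{k,i+1}-x_{k,i}\|\|y-y_{k,i+1}\|$ and $2L\|x_{k,i+1}-x_{k,i}\|$ are then absorbed into the negative quadratic $-\tfrac{1}{2\gamma_k}\|x_{k,i+1}-x_{k,i}\|^2$ produced by the primal projection via Young's inequality; this is precisely where the interplay of $\|A_i\|\leq a_{\max}$, $\eta_k=1/(a_{\max}\sqrt k)$, and $\gamma_k=1/(a_{\max}+\sqrt k)$ keeps the net coefficient nonpositive up to a tame $k$-dependent remainder. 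The main obstacle is this bookkeeping: simultaneously isolating the telescoping cross terms across $i$ (and across $k$), absorbing the $\mathcal O(L)$ Lipschitz residuals and Young-inequality leftovers into the negative primal quadratic, and verifying that the chosen step sizes keep the net aggregate controlled.

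Summing the per-iteration bound over $k=1,\dots,K$, the quadratic terms $\|y-y_{k,1}\|^2$ and $\|x-x_{k,1}\|^2$ telescope using $(x_{k,1},y_{k,1})=(x_k,y_k)$ and $(x_{k+1},y_{k+1})=(x_{k,m+1},y_{k,m+1})$, while the remainder is bounded via $\|x\|\leq D$, $\|y\|\leq B+1$, $\|g_i(x)\|\leq L$, and $\sum_{k=1}^K 1/\sqrt k = \mathcal O(\sqrt K)$. Dividing by $K$ and invoking Jensen's inequality through convexity of $\phi(\cdot,y)$ and concavity of $\phi(x,\cdot)$ yields $\phi(\bar x_K,y)-\phi(x,\bar y_K)\leq \mathcal O(1/\sqrt K)$ uniformly in $(x,y)\in X\times(Y\cap\mathcal K^*)$, in particular for $(x,y)=(x^*,\tilde y)$.

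Finally I would convert this gap bound into suboptimality and infeasibility. Since $\mathcal K^*$ is a cone, $\tilde y\in \mathcal K^*$, and $\|\tilde y\|=\|y^*\|+1\leq B+1$ places $\tilde y\in Y$; Moreau decomposition then gives $\tilde y^T(A\bar x_K-b)=(\|y^*\|+1)\mbox{\bf dist}_{-\mathcal K}(A\bar x_K-b)$, while $\bar y_K^T(Ax^*-b)\leq 0$ because $\bar y_K\in\mathcal K^*$ (a convex combination of $\mathcal K^*$-iterates) and $Ax^*-b\in -\mathcal K$. Therefore $\phi(\bar x_K,\tilde y)-\phi(x^*,\bar y_K)\geq f(\bar x_K)-f(x^*)+(\|y^*\|+1)\mbox{\bf dist}_{-\mathcal K}(A\bar x_K-b)$. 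Combining with the saddle relation $f(x^*)=\phi(x^*,y^*)\leq \phi(\bar x_K,y^*)\leq f(\bar x_K)+\|y^*\|\mbox{\bf dist}_{-\mathcal K}(A\bar x_K-b)$ (which in particular gives $f(\bar x_K)-f(x^*)\geq -\|y^*\|\mbox{\bf dist}_{-\mathcal K}(A\bar x_K-b)$), the $(\|y^*\|+1)$ coefficient is exactly enough to produce $\mbox{\bf dist}_{-\mathcal K}(A\bar x_K-b)\leq \phi(\bar x_K,\tilde y)-\phi(x^*,\bar y_K)$; for the suboptimality, the direct bound $f(\bar x_K)-f(x^*)\leq \phi(\bar x_K,\tilde y)-\phi(x^*,\bar y_K)$ applies when positive, and the saddle relation composed with the infeasibility bound handles the negative case, closing the chain in the theorem.
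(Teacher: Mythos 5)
Your proposal does not prove the statement you were asked to prove. Lemma \ref{lem1} is the classical pair of properties of the Euclidean projection onto a nonempty closed convex set $X$ --- (a) nonexpansiveness and (b) the variational (obtuse-angle) inequality --- which the paper simply cites from \cite{bertsekas2003convex}. What you have written instead is an outline of the convergence analysis of Algorithm \ref{alg1} (essentially Lemma \ref{lem2} together with the proof of Theorem \ref{th1}), in which Lemma \ref{lem1}(b) is \emph{used} as a black box at every projection step. Using a statement as a tool is not the same as establishing it; as it stands, nothing in your text argues why $\|\Pi_X[u]-\Pi_X[v]\|\leq\|u-v\|$ or why $(\Pi_X[u]-u)^T(x-\Pi_X[u])\geq 0$ should hold.

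For the record, the intended argument is short and elementary. For (b): $\Pi_X[u]$ minimizes $z\mapsto\|z-u\|^2$ over $X$, so for any $x\in X$ and $t\in(0,1]$ the point $z_t=\Pi_X[u]+t(x-\Pi_X[u])$ lies in $X$ by convexity, and
\begin{align*}
0\leq \|z_t-u\|^2-\|\Pi_X[u]-u\|^2=2t\,(\Pi_X[u]-u)^T(x-\Pi_X[u])+t^2\|x-\Pi_X[u]\|^2;
\end{align*}
dividing by $t$ and letting $t\to 0^+$ gives (b). For (a): apply (b) with the pair $(u,\,x=\Pi_X[v])$ and with the pair $(v,\,x=\Pi_X[u])$, add the two inequalities, and rearrange to obtain $\|\Pi_X[u]-\Pi_X[v]\|^2\leq(\Pi_X[u]-\Pi_X[v])^T(u-v)$, whence Cauchy--Schwarz yields the claim. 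You should supply this (or cite the source, as the paper does) rather than the algorithmic analysis, which belongs to Lemma \ref{lem2} and Theorem \ref{th1}.
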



\begin{lemma}\label{lem2}
Suppose Assumption \ref{assump1} holds. Let $\{x_k,y_k\}_{k\geq 1}$ be the iterates generated by Algorithm \ref{alg1}, with the step-sizes chosen as $\eta_k=\tfrac{1}{ a_{\max} \sqrt k}$ and $\gamma_k=\tfrac{1}{ a_{\max}+\sqrt k}$ for all $k\geq 1$, where $a_{\max}=\max_{1\leq i\leq m}\{\|A_i\|\}$. Then the following holds for any $y\in Y\cap \mathcal K^*$.
\begin{align*}
\phi(\bar x_{ k},y)-\phi(x^*,\bar y_{ k})&\leq \tfrac{1}{ K} \left(\tfrac{1}{ a_{\max}+1}+2\sqrt K\right) (\tilde C_3+\tilde{C}_1)+\tfrac{1}{ K}\left(\tfrac{1}{ a_{\max}}+\tfrac{\sqrt K}{ a_{\max}}\right)\tilde{C}_2\\
&\quad+\tfrac{2mL^2}{ K\sqrt{K}}+\tfrac{2(B+1)^2a_{\max}\sqrt K}{ K}+\tfrac{2D^2(a_{\max}+\sqrt K)}{ K}\\
&\quad+4D^2\left(\tfrac{a_{\max}+\sqrt K}{ 2K}-\tfrac{\sqrt K}{ K}\right)\leq \mathcal O(1/\sqrt K),
\end{align*}
for some constants $\tilde C_1, \tilde C_2, \tilde C_3\geq 0$ where $(\bar x_K,\bar y_K)\triangleq \tfrac{1}{K}\sum_{k=1}^K (x_k,y_k)$.
\end{lemma}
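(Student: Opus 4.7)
The plan is to carry out a primal-dual three-point-identity analysis in which the dual extrapolation $\eta_k U_{i-1}A_{i-1}(x_{k,i}-x_{k,i-1})$ generates a cross term that telescopes cyclically across the $m$ sub-iterations and then chains across outer iterations thanks to the conventions $A_0=A_m$, $U_0=U_m$, $x_{k,0}=x_{k-1,m}$ and $x_{1,0}=x_1$ baked into Algorithm~\ref{alg1}. For each $(k,i)$ I apply Lemma~\ref{lem1}(b) to both projections, obtaining, for any $x\in X$ and $y\in Y\cap\mathcal K^*$,
\begin{align*}
&\gamma_k\langle g_i(x_{k,i})+A_i^TU_i^Ty_{k,i+1},\, x_{k,i+1}-x\rangle \le \tfrac{1}{2}\|x_{k,i}-x\|^2 - \tfrac{1}{2}\|x_{k,i+1}-x\|^2 - \tfrac{1}{2}\|x_{k,i+1}-x_{k,i}\|^2,\\
&\eta_k\langle U_i(A_ix_{k,i}-b_i)+U_{i-1}A_{i-1}(x_{k,i}-x_{k,i-1}),\, y-y_{k,i+1}\rangle \le \tfrac{1}{2}\|y_{k,i}-y\|^2 - \tfrac{1}{2}\|y_{k,i+1}-y\|^2 - \tfrac{1}{2}\|y_{k,i+1}-y_{k,i}\|^2.
\end{align*}
Convexity of $f_i$ combined with Lemma~\ref{lip rel} lower-bounds $\langle g_i(x_{k,i}), x_{k,i+1}-x\rangle$ by $f_i(x_{k,i})-f_i(x) - L\|x_{k,i+1}-x_{k,i}\|$, and Young's inequality absorbs the $L\|\cdot\|$ error into a fraction of the $\|x_{k,i+1}-x_{k,i}\|^2$ slack above, modulo an additive $O(\gamma_k^2L^2)$ remainder per sub-iteration.

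Next I divide the two inequalities by $\gamma_k$ and $\eta_k$ respectively and add them; a short algebraic manipulation recasts the four coupling inner products as
\begin{align*}
\langle y, U_i(A_ix_{k,i+1}-b_i)\rangle - \langle y_{k,i+1}, U_i(A_ix-b_i)\rangle + R_{k,i}(y),
\end{align*}
with
\begin{align*}
R_{k,i}(y) = \langle y_{k,i+1}-y,\, U_iA_i(x_{k,i+1}-x_{k,i})\rangle - \langle y_{k,i+1}-y,\, U_{i-1}A_{i-1}(x_{k,i}-x_{k,i-1})\rangle.
\end{align*}
Summing the two non-$R$ pieces over $i$ reconstructs the saddle-gap evaluation $\phi(x_k,y)-\phi(x^*,y_k)$, up to Lipschitz-conversion errors $\sum_iL\|x_{k,i}-x_k\|$ on the primal side and small block-alignment errors between $[y_{k,i+1}]_i$ and $[y_k]_i$ on the dual side, both of which Cauchy--Schwarz plus Young absorb into the remaining quadratic budget. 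The pivotal manipulation is telescoping $\sum_{k=1}^K\sum_{i=1}^mR_{k,i}(y)$: reindexing the second term of $R_{k,i}$ by $j=i-1$ and using the cyclic/continuation conventions identifies its value at $(k,i)$ with that of the first term at the preceding sub-iteration along the flattened sequence, so the double sum collapses to a ``diagonal'' residual $\sum_{k,i}\langle y_{k,i+2}-y_{k,i+1},\,U_iA_i(x_{k,i+1}-x_{k,i})\rangle$ plus the single boundary contribution at $(K,m)$; the symmetric starting boundary vanishes because $x_{1,0}=x_{1,1}=x_1$. Each diagonal residual is bounded by Cauchy--Schwarz, then absorbed via Young into $\tfrac{1}{2\gamma_k}\|x_{k,i+1}-x_{k,i}\|^2+\tfrac{1}{2\eta_k}\|y_{k,i+1}-y_{k,i}\|^2$, which works precisely because the prescribed step sizes satisfy $a_{\max}^2\gamma_k\eta_k\le 1$ for every $k$; the lone $(K,m)$ boundary contributes $O(a_{\max}^2(B+1)^2\gamma_K)$.

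Finally, the distance terms telescope in $i$ (using $x_{k,1}=x_k$ and $x_{k,m+1}=x_{k+1}$, and similarly for $y$) and then in $k$ via Abel summation against the slowly-varying weights $1/\gamma_k$ and $1/\eta_k$; the bounds $\|x\|\le D$ and $\|y\|\le B+1$ convert these into $O(D^2/\gamma_K)+O((B+1)^2/\eta_K)$. Combining everything, dividing by $K$, and applying Jensen's inequality to the convex-concave $\phi$ upgrades $\tfrac{1}{K}\sum_k[\phi(x_k,y)-\phi(x^*,y_k)]$ into $\phi(\bar x_K,y)-\phi(x^*,\bar y_K)$; plugging in $\gamma_k=1/(a_{\max}+\sqrt k)$ and $\eta_k=1/(a_{\max}\sqrt k)$ yields the specific constants $\tilde C_1,\tilde C_2,\tilde C_3$ and the explicit $O(1/\sqrt K)$ tail displayed in the claim. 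The load-bearing step, and the main obstacle, is the telescoping of $R_{k,i}$: correctly threading the cyclic/continuation conventions so the double sum truly collapses, and then verifying that the mismatch between $\gamma_k$ and $\eta_k$ still respects $a_{\max}^2\gamma_k\eta_k\le 1$ uniformly in $k$, which the specific schedule was engineered to ensure.
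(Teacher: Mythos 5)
Your proposal is correct and follows essentially the same route as the paper: a primal--dual prox (three-point) inequality for each sub-iteration, with the dual extrapolation term generating a cross term that telescopes over $i$ and chains over $k$ via the conventions $A_0=A_m$, $U_0=U_m$, $x_{k+1,0}=x_{k,m}$, the residual absorbed by Young's inequality under the step-size compatibility condition, drift bounds $\|x_{k,i}-x_k\|=\mathcal O(i\gamma_k)$ and $\|y_{k,i}-y_k\|=\mathcal O(i\eta_k)$ to recover $\phi(x_k,y)-\phi(x^*,y_k)$, telescoping of the distance terms against the decreasing step sizes, and Jensen to pass to the averaged iterates. The only cosmetic difference is that you bound the subgradient error directly via $\|g_i\|\le L$ rather than through Lemma~\ref{lip rel}, which changes nothing of substance.
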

\begin{proof}
For any $k\geq1$, we have the following for any $y\in\mathbb R^d$,
\aj{\begin{align*}
\nonumber \|y_{k,i+1}-y\|^2&=\|y_{k,i+1}-y_{k,i}\|^2+\|y_{k,i}-y\|^2+2\langle y_{k,i+1}-y_{k,i},y_{k,i}-y\pm y_{k,i+1}\rangle\\
&=\|y_{k,i+1}-y_{k,i}\|^2+\|y_{k,i}-y\|^2-2\| y_{k,i+1}-y_{k,i}\|^2\\&\quad+2\langle  y_{k,i+1}-y_{k,i},y_{k,i+1}-y\rangle\\
&=\|y_{k,i}-y\|^2-\|y_{k,i+1}-y_{k,i}\|^2 +\underbrace{2\langle y_{k,i+1}-y_{k,i},y_{k,i+1}-y\rangle}_{\text{{\tiny Term (a)}}}
\end{align*}}
From the definition of $y_{k,i+1}$ and Lemma \ref{lem1}(b) the following holds:
\begin{align}\label{bound y1}
\nonumber 0&\leq (y_{k,i+1}-(y_{k,i}+\eta_kU_i(A_ix_{k,i}-b_i)+\eta_kU_{i-1}A_{i-1}(x_{k,i}-x_{k,i-1}))^T(y-y_{k,i+1})\\
 &=(y_{k,i+1}-y_{k,i})^T(y-y_{k,i+1})+\left(\eta_kU_i(A_ix_{k,i}-b_i)+\eta_kU_{i-1}A_{i-1}(x_{k,i}-x_{k,i-1})\right)^T(y_{k,i+1}-y).
\end{align}
Therefore, term (a) can be written as \begin{align*}
&2\langle y_{k,i+1}-y_{k,i},y_{k,i+1}-y\rangle\leq 2(\eta_kU_i(A_ix_{k,i}-b_i)+\eta_kU_{i-1}A_{i-1}(x_{k,i}-x_{k,i-1}))^T(y_{k,i+1}-y).\end{align*}
Hence, we have the following:
\begin{align*}
\quad&\|y_{k,i+1}-y\|^2\\
&\leq\|y_{k,i}-y\|^2-\|y_{k,i+1}-y_{k,i}\|^2+2\big( \eta_kU_i(A_ix_{k,i}-b_i)\\
&\quad+\eta_kU_{i-1}A_{i-1}(x_{k,i}-x_{k,i-1})\pm \eta_k U_i A_ix_{k,i+1})\big)^T(y_{k,i+1}-y)\\
&=\|y_{k,i}-y\|^2-\|y_{k,i+1}-y_{k,i}\|^2+2\eta_k(y_{k,i+1}-y)^TU_i(A_ix_{k,i+1}-b_i)\\
&\quad+2\eta_k(y_{k,i+1}-y\pm y_{k,i})^TU_{i-1}(A_{i-1}(x_{k,i}-x_{k,i-1}))\\
&\quad-2\eta_k(y_{k,i+1}-y)^TU_i(A_i(x_{k,i+1}-x_{k,i}))\\
&=\|y_{k,i}-y\|^2-\|y_{k,i+1}-y_{k,i}\|^2+2\eta_k(y_{k,i+1}-y)^TU_i(A_ix_{k,i+1}-b_i)\\
&\quad +2\eta_k(y_{k,i}-y)^TU_{i-1}(A_{i-1}(x_{k,i}-x_{k,i-1}))\\
&\quad +2\eta_k(y_{k,i+1}-y_{k,i})^TU_{i-1}(A_{i-1}(x_{k,i}-x_{k,i-1}))\\
&\quad -2\eta_k(y_{k,i+1}-y)^TU_i(A_i(x_{k,i+1}-x_{k,i})),
\end{align*}
Now using Young's inequality, i.e., $a^T b\leq \tfrac{1}{ 2\alpha_k}\|a\|^2+\tfrac{\alpha_k}{ 2}\|b\|^2$, for any $a,b\in\mathbb R^d$ and $\alpha_k>0$, we conclude that
\vspace*{-1mm}
\begin{align}\label{bound1 y}
\quad&\|y_{k,i+1}-y\|^2\leq \|y_{k,i}-y\|^2+(\tfrac{\eta_k}{ \alpha_k}-1)\|y_{k,i+1}-y_{k,i}\|^2\\
&\nonumber\quad+2\eta_k(y_{k,i+1}-y)^TU_i(A_ix_{k,i+1}-b_i)\\
&\nonumber\quad +2\eta_k(y_{k,i}-y)^TU_{i-1}(A_{i-1}(x_{k,i}-x_{k,i-1}))\\
&\nonumber\quad+\eta_k\alpha_k\|U_{i-1}A_{i-1}(x_{k,i}-x_{k,i-1})\|^2-2\eta_k(y_{k,i+1}-y)^TU_i(A_i(x_{k,i+1}-x_{k,i})).
\end{align}
\aj{Similar to \eqref{bound y1}, from the update of $x_{k,i+1}$ and Lemma \ref{lem1}(b) the following holds:
\begin{align*}
(x_{k,i+1}-x_{k,i})^T(x_{k,i+1}-x^*)\leq \left(\gamma_k(g_i(x_{k,i})+A_i^TU_i^Ty_{k,i+1})\right)^T(x^*-x_{k,i+1}).
\end{align*}
Therefore, one can conclude that
\begin{align*}
\|x_{k,i+1}-x^*\|^2&=\|x_{k,i+1}-x_{k,i}\|^2+\|x_{k,i}-x^*\|^2+2\langle x_{k,i+1}-x_{k,i},x_{k,i}-x^*\pm x_{k,i+1}\rangle\\
&=\|x_{k,i}-x^*\|^2-\|x_{k,i+1}-x_{k,i}\|^2+2\langle x_{k,i+1}-x_{k,i},x_{k,i+1}-x^* \rangle\\
&\leq \|x_{k,i}-x^*\|^2-\|x_{k,i+1}-x_{k,i}\|^2 +2\left(\gamma_k(g_i(x_{k,i})+A_i^TU_i^Ty_{k,i+1})\right)^T\underbrace{(x^*-x_{k,i+1})}_{\text{{\tiny Term (b)}}}
\end{align*}
Indeed, adding and subtracting $x_{k,i}$ to term (b) leads to
\begin{align}\label{in1}
\notag\|x_{k,i+1}-x^*\|^2&\leq \|x_{k,i}-x^*\|^2-\|x_{k,i}-x_{k,i+1}\|^2-2\gamma_k(x_{k,i+1}-x_{k,i})^T(g_i(x_{k,i})+A_i^TU_i^Ty_{k,i+1})\\ &\quad-2\gamma_k(x_{k,i}-x^*)(g_i(x_{k,i})+A_i^TU_i^Ty_{k,i+1}).
\end{align}}
From Assumption \ref{assump1}(b), one can easily show that $-2\gamma_k(x_{k,i}-x^*)^Tg_i(x_{k,i})\leq -2\gamma_k(x_{k,i}-x^*)^T(f_i(x_{k,i})-f_i(x^*))$ and from Assumption \ref{assump1}(c) and Lemma \ref{lip rel}, we have that $-2\gamma_k(x_{k,i+1}-x_{k,i}^T)g_i(x_{k,i})\leq 2\gamma_k(f_i(x_{k,i})-f_i(x_{k,i+1}))+4\gamma_kL\|x_{k,i+1}-x_{k,i}\|$. Moreover, for some $\beta_k>0$ we know that $4\gamma_kL\|x_{k,i+1}-x_{k,i}\| \leq\tfrac{2 \gamma_kL^2}{ \beta_k}+2\gamma_k\beta_k\|x_{k,i+1}-x_{k,i}\|^2$, hence, \eqref{in1} can be written as follows.
\begin{align}\label{bound x}
\nonumber\|x_{k,i+1}-x^*\|^2&\leq \|x_{k,i}-x^*\|^2-\|x_{k,i}-x_{k,i+1}\|^2\\
&\nonumber +2\gamma_k(f(x_{k,i})-f_i(x_{k,i+1}))+\tfrac{2\gamma_kL^2}{ \beta_k}+2\gamma_k\beta_k\|x_{k,i+1}-x_{k,i}\|^2\\
&\nonumber -2\gamma_k(x_{k,i+1}-x^*)^TA_i^TU_i^Ty_{k,i+1}-2\gamma_k(f_i(x_{k,i})-f_i(x^*))\\
\nonumber&=\|x_{k,i}-x^*\|^2-(2\gamma_k\beta_k-1)\|x_{k,i}-x_{k,i+1}\|^2+2\gamma_k(f_i(x^*)-f_i(x_{k,i+1}))\\
& +\tfrac{2\gamma_kL^2}{ \beta_k}-2\gamma_k(x_{k,i+1}-x^*)^TA_i^TU_i^Ty_{k,i+1}.
\end{align}
Multiplying \eqref{bound1 y} by $\tfrac{1}{ 2\eta_k}$, \eqref{bound x} by $\tfrac{1}{ 2\gamma_k}$ and summing up the result leads to 
\begin{align*}
&f_i(x_{k,i+1})-f_i(x^*)-(y_{k,i+1}-y)^TU_i(A_ix_{k,i+1}-b_i)- (x^*-x_{k,i+1})^TA_i^TU_i^Ty_{k,i+1}\\
&\quad \leq\tfrac{1}{ 2\eta_k}\|y_{k,i}-y\|^2-\tfrac{1}{ 2\eta_k}\|y_{k,i+1}-y\|^2\\
&\qquad +\left(\tfrac{1}{ 2\alpha_k}-\tfrac{1}{ 2\eta_k}\right)\|y_{k,i+1}-y_{k,i}\|^2+\tfrac{1}{ 2\gamma_k}\|x_{k,i}-x^*\|^2-\tfrac{1}{ 2\gamma_k}\|x_{k,i+1}-x^*\|^2\\
&\qquad+\left(\beta_k-\tfrac{1}{ 2\gamma_k}\right)\|x_{k,,i}-x_{k,i+1}\|^2+(y_{k,i}-y)^TU_{i-1}\left[A_{i-1}(x_{k,i}-x_{k,i-1})\right]\\
&\qquad +\tfrac{\alpha_k}{ 2}\|U_{i-1}A_{i-1}(x_{k,i}-x_{k,i-1})\|^2-(y_{k,i+1}-y)^T\left[U_iA_i(x_{k,i+1}-x_{k,i})\right]+\tfrac{L^2}{ \beta_k}.
\end{align*}
Next, by selecting $\eta_k$ and $\gamma_k$ such that $\tfrac{1}{ 2\alpha_k}-\tfrac{1}{ 2\eta_k}\leq 0$ and $\tfrac{\alpha_k}{2}\|U_{i-1}A_{i-1}\|^2\leq \tfrac{1}{ 2\gamma_k}-{\beta_k}$, one can drop $\left(\tfrac{1}{ 2\alpha_k}-\tfrac{1}{ 2\eta_k}\right)\|y_{k,i+1}-y_{k,i}\|^2 $ in the above inequality  to obtain the following result 
\begin{align*}
&f_i(x_{k,i+1})-f_i(x^*)-(y_{k,i+1}-y)^TU_i(A_ix_{k,i+1}-b_i)- (x^*-x_{k,i+1})^TA_i^TU_i^Ty_{k,i+1}\\
&\quad \leq \tfrac{1}{ 2\eta_k}\left(\|y_{k,i}-y\|^2-\|y_{k,i+1}-y\|^2\right)+\tfrac{1}{ 2\gamma_k}\left(\|x_{k,i}-x^*\|^2-\|x_{k,i+1}-x^*\|^2\right)\\
&\qquad+(y_{k,i}-y)^T\left[U_{i-1}A_{i-1}(x_{k,i}-x_{k,i-1})\right]-(y_{k,i+1}-y)^T\left[U_iA_i(x_{k,i+1}-x_{k,i})\right]\\
&\qquad+\left({\beta_k}-\tfrac{1}{ 2\gamma_k}\right)\left(\|x_{k,i}-x_{k,i+1}\|^2-\|x_{k,i-1}-x_{k,i}\|^2\right)+\tfrac{L^2}{ \beta_k}.
\end{align*}
Summing the result over $i$ from $1$ to $m$ we conclude that
\begin{align}\label{bound 1}
\nonumber&\sum_{i=1}^m\left(f_i(x_{k,i+1})-f_i(x^*)-(y_{k,i+1}-y)^TU_i(A_ix_{k,i+1}-b_i)\right)- (x^*-x_{k,i+1})^TA_i^TU_i^Ty_{k,i+1}\\
\nonumber&\quad \leq \tfrac{1}{ 2\eta_k}\left(\|y_{k,1}-y\|^2-\|y_{k,m+1}-y\|^2\right)+\tfrac{1}{ 2\gamma_k}\left(\|x_{k,1}-x^*\|^2-\|x_{k,m+1}-x^*\|^2\right)\\
\nonumber&\qquad +(y_{k,1}-y)^T\left[U_{0}A_{0}(x_{k,1}-x_{k,0})\right]-(y_{k,m+1}-y)^T\left[U_mA_m(x_{k,m+1}-x_{k,m})\right]\\
&\qquad+\left({\beta_k}-\tfrac{1}{ 2\gamma_k}\right)\left(\|x_{k,m}-x_{k,m+1}\|^2-\|x_{k,0}-x_{k,1}\|^2\right)+\tfrac{mL^2}{ \beta_k}.
\end{align}
Now we proceed by providing a lower bound for the left hand side of \eqref{bound 1}. In particular, using Lemma \ref{lem1}(a) the following holds:
\begin{align*}
\|x_{k,2}-x_k\|&=\|\mathbf \Pi_X(x_{k,1}-\gamma_k(g_1(x_{k,1})+A_1^TU_1^Ty_{k,3}))-\mathbf \Pi_X(x_k)\|\\
&\leq \gamma_k(\|g_1(x_{k,1})\|+\|A_1\|\|U_1^Ty_{k,3}\|)\leq \gamma_k(L+(B+1)\|A_1\|).\\
\Rightarrow\|x_{k,3}-x_k\|&=\|\mathbf \Pi_X(x_{k,2}-\gamma_k(g_2(x_{k,2})+A_2^TU_2^Ty_{k,4}))-\mathbf \Pi_X(x_k)\|\\
&\leq \|x_{k,2}-x_k\|+\gamma_k(\|g_2(x_{k,2})\|+\|A_2\|\|U_2^Ty_{k,4}\|)\\&\leq  \gamma_k(2L+(B+1)(\|A_1\|+\|A_2\|)).
\end{align*}
Therefore, continuing this procedure for any $i\geq 1$, we conclude that 
\begin{align}\label{bound 4}
\|x_{k,i}-x_k\|\leq \gamma_k\left(iL+(B+1)\sum_{\ell=1}^i\|A_\ell\|\right)\leq \gamma_ki\left(L+(B+1)a_{\max}\right),
\end{align}
where $a_{\max}=\max_{1\leq i\leq m}\{\|A_i\|\}$. Let $b_{\max}=\max_{1\leq i\leq m}\{\|b_i\|\}$, then similar to \eqref{bound 4} one can also obtain the following for the dual iterates
\begin{align}\label{bound y}
\|y_{k,i}-y_k\|\leq \eta_k i(3Da_{\max}+b_{\max}).
\end{align}
Now, we can obtain a lower bound for the left hand side of \eqref{bound 1}. 
\begin{align}\label{bound 2}
\nonumber&\sum_{i=1}^m\left(f_i(x_{k,i+1})-f_i(x^*)-(y_{k,i+1}-y)^TU_i(A_ix_{k,i+1}-b_i)\right)- (x^*-x_{k,i+1})^TA_i^TU_i^Ty_{k,i+1}\\
\nonumber&\quad =f(x_k)-f(x^*)+\sum_{i=1}^m\left(f_i(x_{k,i+1})-f_i(x_k)-(y_{k,i+1}-y)^TU_i(A_ix_{k,i+1}-b_i)\right)\\\nonumber&\qquad- (x^*-x_{k,i+1})^TA_i^TU_i^Ty_{k,i+1}\\
\nonumber&\quad \geq f(x_k)-f(x^*)+\sum_{i=1}^m L\|x_{k,i+1}-x_k\|+\sum_{i=1}^m \big[(y-y_{k,i+1})^TU_i(A_ix_{k,i+1}-b_i)\\
&\qquad- (x^*-x_{k,i+1})^TA_i^TU_i^Ty_{k,i+1}\big] \pm [ (y-y_k)^T(Ax_k-b)-y_k^TA(x^*-x_k)],
\end{align}
where in the last inequality we used Assumption \ref{assump1}(c). Also, using \eqref{bound 4} and \eqref{bound y}, we can obtain the following bound for any $y\in Y\cap \mathcal K^*$,
\begin{align}\label{bound 3}
\nonumber\quad&\Big|\sum_{i=1}^m \big[(y-y_{k,i+1})^TU_i(A_ix_{k,i+1}-b_i)\big]- (y-y_k)^T(Ax_{k}-b)
\\&\quad \nonumber+\sum_{i=1}^m \big[y_{k,i+1}^TU_iA_i(x_{k,i+1}-x^*)\big]-y_k^TA(x_k-x^*)\Big|\\ \nonumber&=
\Big|\sum_{i=1}^m \big[y^TU_i(A_ix_{k,i+1}-b_i)+y_{k,i+1}^TU_ib_i\big]-y^T(Ax_k-b)-y_k^Tb\\&\quad \nonumber-\sum_{i=1}^m \big[y_{k,i+1}^TU_iA_ix^*\big]+y_k^TAx^* \Big|\\
\nonumber&=\Big| \sum_{i=1}^m \big[y^TU_i(A_ix_{k,i+1}-b_i)\big]-y^T(Ax_k-b)\\&\quad \nonumber-\sum_{i=1}^m \big[y_{k,i+1}^TU_i^T(A_ix^*-b_i)\big]+y_k^T(Ax^*-b)\Big|\\
\nonumber&\leq \left| {y}^T\left(\sum_{i=1}^m U_iA_i(x_{k,i+1}-x_{k})\right)-\left(\sum_{i=1}^m(y_{k,i+1}-y_{k})^TU_i(A_ix^*-b_i)\right) \right|\\
\nonumber& \leq \sum_{i=1}^m (B+1)\|A_i\|\|x_{k,i+1}-x_{k}\|+\sum_{i=1}^m\|y_{k,i+1}-y_{k}\|\|U_i(A_ix^*-b_i)\|\\
& \leq \gamma_k\frac{m(m+3)}{2}\|{y}\|a_{\max}(L+a_{\max}(B+1))\\
&\nonumber\quad +\eta_k\frac{m(m+3)}{2}(a_{\max}\|{x^*}\|+b_{\max})(3Da_{\max}+b_{\max})=\gamma_k\tilde{C}_1+\eta_k\tilde{C}_2,
\end{align}
where we used the fact that $y\in Y$, i.e. $\|y\|\leq B+1$, and we let  $\tilde C_1\triangleq \frac{m(m+3)}{2}(B+1)a_{\max}(L+a_{\max}(B+1))$ and $\tilde C_2\triangleq\frac{m(m+3)}{2}(a_{\max}\|{x^*}\|+b_{\max})(3Da_{\max}+b_{\max})$. Using \eqref{bound 3} within \eqref{bound 2} one can conclude the  following for any $y\in Y\cap \mathcal K^*$,
\begin{align*}
&\sum_{i=1}^m\left(f_i(x_{k,i+1})-f_i(x^*)-(y_{k,i+1}-y)^TU_i(A_ix_{k,i+1}-b_i)\right)- (x^*-x_{k,i+1})^TA_i^TU_i^Ty_{k,i+1}\\
&\quad \geq \phi(x_k,y)-\phi(x^*,y_k)-\tfrac{m(m+1)}{ 2}\gamma_kL(L+a_{\max}(B+1))-\gamma_k\tilde{C}_1-\eta_k\tilde{C}_2.
\end{align*}
Therefore, the inequality \eqref{bound 1} can be rewritten as follows for any $y\in Y\cap \mathcal K^*$,
\begin{align}\label{bound phi}
\nonumber\quad &\phi(x_k,y)-\phi(x^*,y_k)\\
\nonumber&\leq \gamma_k \tilde C_3+\gamma_k\tilde{C}_1+\eta_k\tilde{C}_2+\tfrac{1}{ 2\eta_k}\left(\|y_{k,1}-y\|^2-\|y_{k+1,1}-y\|^2\right)\\
\nonumber&\quad+\tfrac{1}{ 2\gamma_k}\left(\|x_{k,1}-x^*\|^2-\|x_{k+1,1}-x^*\|^2\right) +(y_{k,1}-y)^T\left[U_{m}A_{m}(x_{k,1}-x_{k,0})\right]\\
\nonumber&\quad-(y_{k+1,1}-y)^T\left[U_mA_m(x_{k+1,1}-x_{k+1,0})\right]\\
&\quad+\left({\beta_k}-\tfrac{1}{ 2\gamma_k}\right)\left(\|x_{k+1,0}-x_{k+1,1}\|^2-\|x_{k,0}-x_{k,1}\|^2\right)+\tfrac{mL^2}{ \beta_k},
\end{align}
where $\tilde C_3\triangleq \tfrac{m(m+1)}{ 2}L(L+a_{\max}(B+1))$ and we used $y_{k,m+1}=y_{k+1,1}$, $x_{k,m+1}=x_{k+1,1}$, $x_{k,m}=x_{k+1,0}$, $A_0=A_m$, and $U_0=U_m$. Before summing \eqref{bound phi} over $k$, we state some helpful inequalities on the consecutive terms involved in \eqref{bound phi}.
\begin{align}\label{tele1}
\nonumber \quad&\sum_{k=1}^K \tfrac{1}{ 2\gamma_k}(\|x_{k,1}-x^*\|^2-\|x_{k+1,1}-x^*\|^2)\\
\nonumber &\quad=\tfrac{1}{ 2\gamma_1}\|x_{1,1}-x^*\|^2+\Big[\sum_{k=2}^K(\tfrac{1}{ 2\gamma_k}-\tfrac{1}{ 2\gamma_{k-1}})4D^2\Big]-\tfrac{1}{ 2\gamma_K}\|x_{K+1,1}-x^*\|^2\\
 &\quad\leq \tfrac{1}{ 2\gamma_K}(4D^2-\|x_{K+1,1}-x^*\|^2),
\end{align}
where we used Assumption \ref{assump1}(d) and $\{\gamma_k\}_k$ is a decreasing sequence. 
Similarly,
\begin{align}\label{tele2}
 &\sum_{k=1}^K \tfrac{1}{ 2\eta_k}(\|y_{k,1}-y\|^2-\|y_{k+1,1}-y\|^2)\leq \tfrac{1}{ 2\eta_K}(4(B+1)^2-\|y_{K+1,1}-y\|^2).
\end{align}
Summing both sides of \eqref{bound phi} over $k$ from $1$ to $K$ and using \eqref{tele1} and \eqref{tele2} we conclude that  for any $y\in Y\cap \mathcal K^*$,
\begin{align*}
&\sum_{k=1}^K \phi(x_k,y)-\phi(x^*,y_k)\\
&\leq \sum_{k=1}^K \left(\gamma_k (\tilde C_3+\tilde{C}_1)+\eta_k\tilde{C}_2+\tfrac{mL^2}{ \beta_k}\right)+\tfrac{1}{ 2\eta_K}(4(B+1)^2-\|y_{K+1,1}-y\|^2)\\
&\quad+\tfrac{1}{ 2\gamma_K}(4D^2-\|x_{K+1,1}-x^*\|^2)-(y_{K,m+1}-y)^T(U_mA_m(x_{K,m+1}-x_{K,m}))\\
&\quad +\left(\tfrac{1}{ 2\gamma_K}-{\beta_K}\right)\left(4D^2-\|x_{K+1,0}-x_{K+1,1}\|^2\right)\\
& \leq \sum_{k=1}^K \left(\gamma_k (\tilde C_3+\tilde{C}_1)+\eta_k\tilde{C}_2+\tfrac{mL^2}{ \beta_k}\right)+\tfrac{1}{ 2\eta_K}(4(B+1)^2-\|y_{K+1,1}-y\|^2)\\
&\quad+\tfrac{1}{ 2\gamma_K}(4D^2-\|x_{K+1,1}-x^*\|^2)+\tfrac{1}{ 2\alpha_K}\|y_{K,m+1}-y\|^2\\
&\quad +\tfrac{\alpha_K}{ 2}\|U_mA_m(x_{K,m+1}-x_{K,m})\|^2+\left(\tfrac{1}{ 2\gamma_K}-{\beta_K}\right)\left(4D^2-\|x_{K+1,0}-x_{K+1,1}\|^2\right).
\end{align*}
Now using the fact that $\tfrac{\alpha_K}{ 2}\|U_mA_m\|^2\leq \tfrac{1}{ 2\gamma_K}-{\beta_K}$, $x_{k,m+1}=x_{k+1,1}$, $x_{k,m}=x_{k+1,0}$ and choosing $\eta_k=\alpha_k$, one can simplify the above inequality as follows
\begin{align*}
&\sum_{k=1}^K \phi(x_k,y)-\phi(x^*,y_k)\leq \sum_{k=1}^K \left(\gamma_k (\tilde C_3+\tilde{C}_1)+\eta_k\tilde{C}_2+\tfrac{mL^2}{ \beta_k}\right)+\tfrac{2(B+1)^2}{ \eta_K}+\tfrac{2D^2}{ \gamma_K}+4D^2\left(\tfrac{1}{ 2\gamma_K}-{\beta_K}\right).
\end{align*}
Choosing $\eta_k=\alpha_k=\tfrac{1}{ a_{\max}\sqrt{k}}$, $\beta_k=\frac{1}{2}\sqrt{k}$, $\gamma_k=\tfrac{1}{ a_{\max}+\sqrt k}$ and using the fact that $\sum_{k=1}^K  \tfrac{1}{ \sqrt k}\leq1+\int_{x=1}^K \tfrac{1}{ \sqrt x}dx\leq 1+\sqrt K$ and similarly $\sum_{k=1}^K \tfrac{1}{ a_{\max}+\sqrt k}\leq \tfrac{1}{ a_{\max}+1}+2\sqrt K$, the desired result can be obtained.   

\end{proof}

Now we are ready to prove the main result of the paper. 

\paragraph{ Proof of Theorem \ref{th1}.}
Let $(x^*,y^*)$ be an arbitrary saddle point of \eqref{sp}. 
Using the fact that for any $u\in \mathbb R^d$, $u=\mathbf \Pi_{-\mathcal K}(u)+\mathbf \Pi_{\mathcal K^*}(u)$ and $\langle\mathbf \Pi_{-\mathcal K}(u),\mathbf \Pi_{\mathcal K^*}(u) \rangle=0$, one can show that $\langle A\bar x_K-b,\tilde y \rangle=(\|y^*\|+1) \mbox{\bf dist}_{-\mathcal K}(A\bar x_K-b).$
Therefore, $\phi(\bar x_{K},\tilde y)=f(\bar x_K)+(\|y^*\|+1) \mbox{\bf dist}_{-\mathcal K}(A\bar x_K-b)$. Moreover, since $(x^*,y^*)$ is a saddle point of \eqref{sp} one can conclude that $f(x^*)=\phi(x^*,y^*)\geq \phi(x^*,\bar y_K)$ and by Lemma \ref{lem2} at $y=\tilde y\in Y\cap\mathcal K^*$,
\begin{align}\label{main1}
f(\bar x_K)-f(x^*)+(\|y^*\|+1) \mbox{\bf dist}_{-\mathcal K}(A\bar x_K-b)&\leq \phi(\bar x_{K},\tilde y)-\phi(x^*,\bar y_{K})\leq \mathcal O(1/\sqrt K).
\end{align}
In addition, using the fact that for any $y\in \mathbb R^d$, $\langle y^*,y\rangle\leq \langle y^*,\mathbf \Pi_{\mathcal K^*}(y) \rangle\leq \|y^*\| \mbox{\bf dist}_{-\mathcal K}(y)$, the following can be obtain:
\begin{align}\label{main2}
  0\leq \phi(\bar x_{K},y^*)-\phi(x^*, y^*)&\nonumber=f(\bar x_K)-f(x^*)+\langle A\bar x_K-b,y^*\rangle  \\ &\leq f(\bar x_K)-f(x^*)+\|y^*\| \mbox{\bf dist}_{-\mathcal K}(A\bar x_K-b).
\end{align}
Combining \eqref{main1} and \eqref{main2} gives the desired result.

\section{Numerical results}\label{sec:num}
In this section, we compare the performance of PDIG with aIR-IG \cite{kaushik2020projection} and PDSG \cite{xu2020primal} to solve the following constrained Lasso problem. 
\begin{align}\label{lasso}
\min_{x\in [-10,10]} \tfrac{1}{2}\sum_{i=1}^m\|C_ix-d_i\|^2+\tfrac{\lambda}{m}\sum_{i=1}^m \|x\|_1, \ \mbox{s.t.} \ Bx\leq 0,
\end{align} 
where matrix $C=[C_i]_{i=1}^m\in \mathbb R^{pm\times n}$, $d=[d_i]_{i=1}^m\in \mathbb R^{pm}$, and $B\in \mathbb R^{n-1\times n}$. We set $m=1000$, $n=40$, $p=n+5$, and $\lambda=0.1$. \eqref{lasso} is a special case of \eqref{p1}, if we set $f_i(x)=\tfrac{1}{ 2}\|C_ix-d_i\|^2+\tfrac{\lambda}{m} \|x\|_1$, $A_i=B_i$ for $1\leq i\leq n-1$ and $A_i=0$ for $n\leq i\leq m$, $b_i=0$, and $\mathcal K_i=\mathbb R_+$ for all $i\in\{1,\hdots,m\}$. The problem data is generated as follows. First, we generate a vector $\bar{x}\in\mathbb R^n$ whose first 10 and last 10 components are sampled from $[-10,0]$ and $[0,10]$ uniformly at random in ascending order, respectively, and the other 20 middle components are set to zero. Next, we set $d=C\bar{x}+\eta$, where $\eta\in \mathbb R^{pm}$ is a random vector with i.i.d. components with Gaussian distribution with mean zero and standard deviation $10^{-1}$. We choose the stepsizes of PDIG as suggested in Theorem \ref{th1}. For aIR-IG, according to \cite{kaushik2020projection}, the stepsize is set to $1/(1+\sqrt k)$ and the regularizer is $10/(1+k)^{0.25}$ and for PDSG, as suggested in \cite{xu2020primal}, the primal and dual step sizes are set to $1/(log(k+1)\sqrt{k+1})$. 
\begin{figure}[htb]
\centering
\includegraphics[scale=0.13]{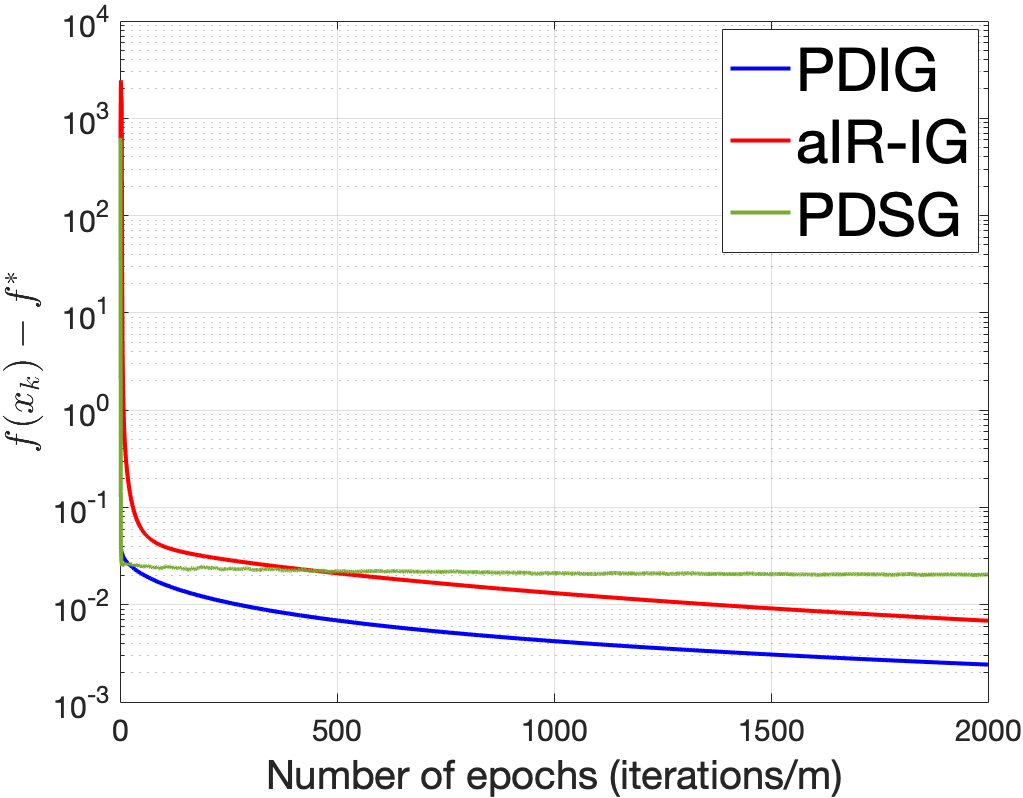}
\includegraphics[scale=0.13]{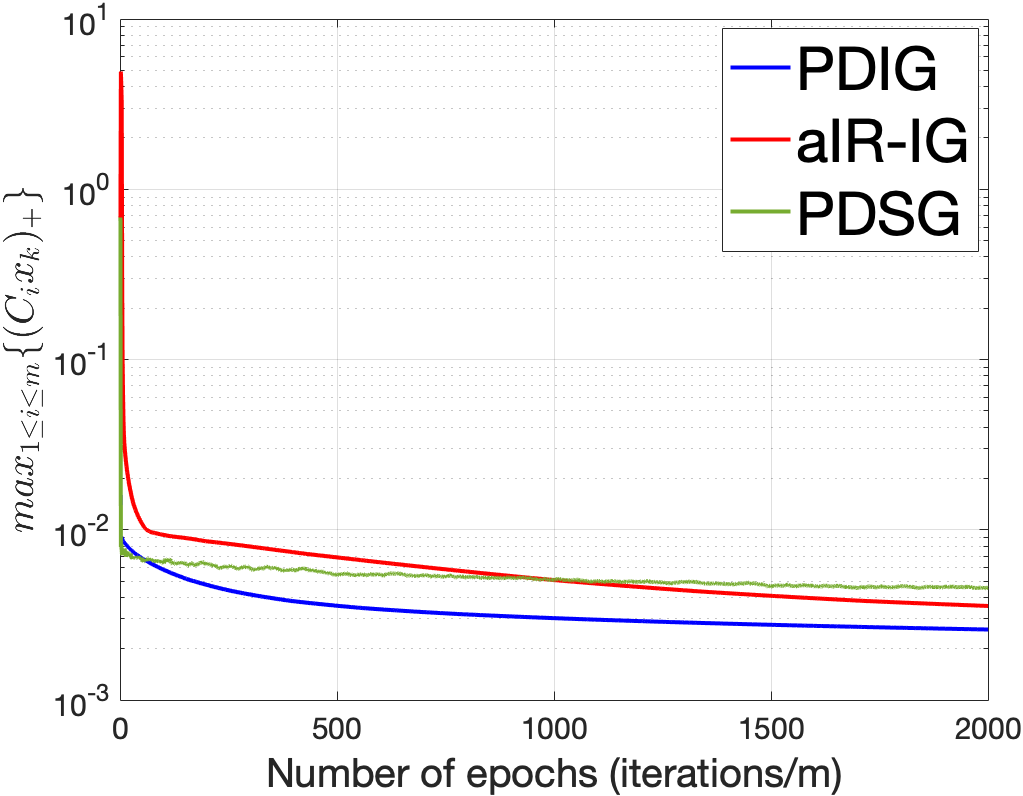}
\caption{Comparing suboptimality (left) and infeasibility (right) of PDIG, aIR-IG and PDSG.}
\label{fig1}\end{figure}

In Figure \ref{fig1}, we compared the suboptimality and infeasibility of three methods. We observe that PDIG outperforms aIR-IG which matches with the faster convergence rate of PDIG. Also, the rate of $\mathcal O( 1/\sqrt k)$ for our proposed method  is  deterministic and our step-sizes diminish periodically, in contrast with PDSG where the step-sizes diminish with iteration counter.  
\section{Concluding remarks}

Motivated by the finite sum constrained problems arising in machine learning and wireless sensor networks, we introduced a novel primal-dual incremental gradient scheme to solve nonsmooth and convex problems  with linear conic constraints. We improved the existing  rate results of the incremental gradient approach for this setting to $\mathcal O(1/\sqrt k)$ in terms of suboptimality and infeasibility in a deterministic manner.

%
%

\bibliographystyle{spmpsci}    
\bibliography{biblio} 
\end{document}